\documentclass{article}
\usepackage{amsmath,amsthm,amssymb,graphicx}

\usepackage{faktor}

\newtheorem{theorem}{Theorem}[section]
\newtheorem{lemma}[theorem]{Lemma}
\newtheorem{conjecture}[theorem]{Conjecture}

\theoremstyle{remark}
\newtheorem{remark}[theorem]{Remark}

%\addtolength{\voffset}{-3cm}
%\addtolength{\textheight}{4cm}

\begin{document}
 
\title{On the topological Kalai-Meshulam conjecture}

\author{Alexander Engstr\"om\footnote{{\tt alexander.engstrom@aalto.fi}}  }

\date\today

\maketitle
 
\begin{abstract}
Chudnovsky, Scott, Seymour and Spirkl recently proved a conjecture by Kalai and Meshulam stating that the reduced Euler characteristic of the independence complex of a graph without induced cycles of length divisible by three is in $\{-1,0,1\}.$ Gauthier had earlier proved that assuming no cycles of those lengths, induced or not. Kalai and Meshulam also stated a stronger topological conjecture, that the total betti numbers are in $\{0,1\}.$ Towards that we prove an even stronger statement in the same setting as Gauthier: The independence complexes are either contractible or homotopy equivalent to spheres. We conjecture that it also holds in the general setting.
\end{abstract}

\section{Introduction}

Kalai and Meshulam \cite{KalaiBlog} made a sequence of conjectures relating topological and chromatic properties of graphs about two decades ago. The independence complex $\mathrm{Ind}(G)$ of a graph $G$ is the simplicial complex formed by its independent sets. The reduced Euler characteristic is $\tilde{\chi}(\mathrm{Ind}(G))=$
$ \sum_i (-1)^i \beta_i(\mathrm{Ind}(G)) = -\sum_i (-1)^i \,\, \# \textrm{independent sets of order $i$ in G;}$ and the total Betti number is $\beta(\mathrm{Ind}(G))=  \sum_i \beta_i(\mathrm{Ind}(G)).$

\begin{conjecture}\label{c1}
A graph with high enough chromatic number has an induced cycle of length divisible by three.
\end{conjecture}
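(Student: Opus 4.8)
The plan is to prove the contrapositive: there is an absolute constant $c$ such that if a graph $G$ has no induced cycle of length divisible by three, then $\chi(G)\le c$. Two cheap observations come first. The hypothesis is inherited by every induced subgraph of $G$; and, since a triangle is an induced $C_3$ of length $3$, the graph $G$ is triangle-free. I would pass to a connected component and then, using that the chromatic number of a connected graph equals the maximum of the chromatic numbers of its blocks (all of which are induced subgraphs), reduce to the case that $G$ is $2$-connected, the remaining block types $K_1$ and $K_2$ being harmless.

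Next I would localise the chromatic number by a breadth-first layering. Fix a root $r$ and let $V_0=\{r\},V_1,V_2,\dots$ be its BFS layers. Every edge of $G$ lies inside a single layer or between two consecutive layers, so giving $v\in V_i$ the colour $(c_i(v),\,i\bmod 3)$, where $c_i$ is an optimal proper colouring of $G[V_i]$, is proper; hence $\chi(G)\le 3\max_i\chi(G[V_i])$, and it suffices to bound $\chi(G[V_i])$ for a single layer. The point of this reduction is that inside a layer the BFS tree $T$ comes for free: each $v\in V_i$ has an induced ancestor-path $Q_v$ of length $i$ from $v$ down to $r$; if $u,v\in V_i$ have ancestor-paths first meeting at a vertex $w\in V_j$, then $Q_u[w,u]\cup Q_v[w,v]$ is a $uv$-path of the \emph{even} length $2(i-j)$; and if some induced $uv$-path $R$ of length $\ell$ in $G[V_i]$ avoids $Q_u\cup Q_v$ with no chords to it, then $Q_u[w,u]\cup R\cup Q_v[v,w]$ is an induced cycle of $G$ of length $\ell+2(i-j)$. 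So if $\chi(G[V_i])$ were large I would aim, after passing to a large-$\chi$ induced subgraph of $G[V_i]$ on which the ancestor-paths are ``aligned'', to produce such configurations with $\ell$ in every residue class modulo $3$; choosing $\ell\equiv-2(i-j)\pmod 3$ then yields an induced cycle of $G$ of length divisible by three, a contradiction.

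The crux --- the step I expect to be by far the hardest --- is turning ``$\chi(G[V_i])$ large'' into ``enough control over the lengths $\ell$''. A single long induced path is useless, since its length modulo $3$ is arbitrary, so one is forced into $\chi$-boundedness machinery for the \emph{induced}-subgraph order (of Gy\'arf\'as--Sumner and Scott--Seymour type): roughly, that a triangle-free graph of large chromatic number must contain an induced subdivision of a large bipartite graph, or a comparable gadget of many internally disjoint paths of consecutive lengths, from which the required residues can be read. Carrying this out inside a BFS layer while keeping the ancestor structure clean --- so that the closing-up genuinely produces an \emph{induced} cycle, with no stray chords among $R$, $Q_u$ and $Q_v$, which is where $2$-connectedness and a careful choice of $w$ enter --- is the technical core, and is in substance the argument of Bonamy, Charbit and Thomass\'e; that is the route I would take, extracting along the way the finer ``all three residues modulo $3$'' information one would like for a topological strengthening.
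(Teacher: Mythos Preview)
The paper does not prove this statement. Conjecture~\ref{c1} is recorded without argument; the text simply notes that it was established by Bonamy, Charbit and Thomass\'e~\cite{BCT14} and later generalised by Scott and Seymour~\cite{SS17}. There is therefore no in-paper proof to compare your proposal against.

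As for your sketch itself: it is a reasonable outline of the Bonamy--Charbit--Thomass\'e strategy, and you are candid about where the real difficulty lies. The preliminary reductions (triangle-freeness, passing to blocks, BFS layering) are correct; in the layering step $i\bmod 2$ already suffices, but $i\bmod 3$ does no harm. Your observation that a root-to-vertex BFS-tree path is automatically induced is correct, since such a path meets each layer in a single vertex and $G$ has no edges jumping two or more layers. However, the proposal is explicitly incomplete: the step you label ``the crux'' --- extracting from a triangle-free layer of large chromatic number enough induced paths with prescribed lengths modulo~$3$, while simultaneously ensuring that the ancestor paths $Q_u,Q_v$ and the connecting path $R$ close up to an \emph{induced} cycle with no stray chords --- is essentially the entire content of~\cite{BCT14}, and you have not supplied it. What you have written is a plan for a proof rather than a proof; completing it would require reproducing that machinery in full.
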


\begin{conjecture}\label{c2}
For any graph $G,$ we have that $-1 \leq \tilde{\chi}(\mathrm{Ind}(H)) \leq 1$ for every induced subgraph $H$ of $G$ if and only if there are no induced cycles of length divisible by three in $G.$
\end{conjecture}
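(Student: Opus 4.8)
The statement is an equivalence, so the plan is to prove the two implications separately; essentially all of the content lies in one of them.

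First, suppose $G$ has no induced cycle of length divisible by three, and let $H$ be an arbitrary induced subgraph of $G$. Every induced cycle of $H$ is also an induced cycle of $G$, so $H$ too has no induced cycle of length divisible by three, and applying the theorem of Chudnovsky, Scott, Seymour and Spirkl quoted in the abstract to $H$ gives $-1\le\tilde\chi(\mathrm{Ind}(H))\le 1$. Since $H$ was arbitrary, this is exactly the asserted bound, so this implication is immediate once that theorem is granted.

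For the converse I would argue by contraposition: assume $G$ contains an induced cycle $C$ of length $n=3k$ with $k\ge 1$. As $C$ is induced, it is itself an induced subgraph of $G$, so it is enough to exhibit one induced subgraph $H$ with $\tilde\chi(\mathrm{Ind}(H))\notin\{-1,0,1\}$, and the natural choice is $H=C$. I would compute the homotopy type of $\mathrm{Ind}(C_{3k})$ via the standard deletion recursion for independence complexes: for a graph $\Gamma$ and a vertex $v$ there is a cofibre sequence $\mathrm{Ind}(\Gamma\setminus N[v])\hookrightarrow\mathrm{Ind}(\Gamma\setminus v)\to\mathrm{Ind}(\Gamma)$, and whenever the first map is nullhomotopic it yields $\mathrm{Ind}(\Gamma)\simeq\mathrm{Ind}(\Gamma\setminus v)\vee\Sigma\,\mathrm{Ind}(\Gamma\setminus N[v])$. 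Deleting a vertex $v$ of $C_{3k}$ leaves $C_{3k}\setminus v=P_{3k-1}$ and $C_{3k}\setminus N[v]=P_{3k-3}$, both paths; running the same recursion on paths gives, by a routine induction, that $\mathrm{Ind}(P_{3k-1})\simeq S^{k-1}$ and $\mathrm{Ind}(P_{3k-3})\simeq S^{k-2}$ (with the convention $S^{-1}=\mathrm{Ind}(\emptyset)$ covering $k=1$); the connecting map $S^{k-2}\to S^{k-1}$ is nullhomotopic on dimensional grounds, so $\mathrm{Ind}(C_{3k})\simeq S^{k-1}\vee S^{k-1}$. Hence $\tilde\chi(\mathrm{Ind}(H))=2(-1)^{k-1}\in\{-2,2\}$, which is not in $\{-1,0,1\}$, completing the contrapositive.

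There is no genuine obstacle once the Chudnovsky, Scott, Seymour and Spirkl theorem is invoked: that deep result carries all the difficulty, and it does so on the forward side. On the converse side the only point that needs care is that the witnessing value really lies outside $\{-1,0,1\}$, i.e.\ that $\mathrm{Ind}(C_{3k})$ is a wedge of \emph{two} spheres rather than a single sphere or a contractible space; a reader preferring to avoid homotopy theory can instead verify directly, from the definition of $\tilde\chi$ in terms of the numbers of independent sets, that $\tilde\chi(\mathrm{Ind}(C_{3k}))=\pm 2$, and the case $k=1$, where $C_3=K_3$ and $\mathrm{Ind}(K_3)$ consists of three isolated points with $\tilde\chi=2$, is a convenient sanity check.
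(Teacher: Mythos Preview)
The paper does not give its own proof of this statement: it is listed as a conjecture, with the full proof attributed to Chudnovsky, Scott, Seymour and Spirkl~\cite{CSSS18}. Your proposal is correct and matches the paper's treatment exactly---the forward implication \emph{is} the CSSS theorem you invoke, and for the converse the paper's introduction records the same fact you prove, that $\mathrm{Ind}(C_l)$ is homotopy equivalent to a wedge of two spheres when $3\mid l$, so that $\tilde\chi(\mathrm{Ind}(C_{3k}))=\pm 2$.
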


\begin{conjecture}\label{c3}
For any graph $G,$ we have that $\beta(\mathrm{Ind}(H)) \leq 1$ for every induced subgraph $H$ of $G$ if and only if there are no induced cycles of length divisible by three in $G.$
\end{conjecture}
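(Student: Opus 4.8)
\medskip
\noindent\textbf{Outline.}
One direction is routine. If $G$ contains an induced cycle $C_{3k}$, take $H=C_{3k}$: by the classical computation of the homotopy type of the independence complex of a cycle one has $\mathrm{Ind}(C_{3k})\simeq S^{k-1}\vee S^{k-1}$ (and, in passing, $\mathrm{Ind}(C_{3k+1})\simeq S^{k-1}$, $\mathrm{Ind}(C_{3k+2})\simeq S^{k}$), so $\beta(\mathrm{Ind}(H))=2>1$. For the other direction, since the class of graphs with no induced cycle of length divisible by three is hereditary, it suffices to prove $\beta(\mathrm{Ind}(G))\le 1$ for every such $G$ and then apply this to each induced subgraph $H$. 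In fact I would aim for the sharper statement announced in the abstract — $\mathrm{Ind}(G)$ is contractible or homotopy equivalent to a single sphere — which is what lets the induction propagate; it is unclear how to push the bare inequality $\beta\le 1$ through an induction, or to extract it from the Euler-characteristic statement of Conjecture~\ref{c2}, since homology spread over several degrees could cancel.

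\medskip
\noindent\textbf{The toolbox.}
Run an induction on $|V(G)|$ built from four homotopy reductions for independence complexes, each of which passes only to induced subgraphs and hence preserves the hypothesis: (A) if $v$ is isolated, $\mathrm{Ind}(G)=v\ast\mathrm{Ind}(G-v)$ is contractible; (B) the fold lemma — if $v\neq u$ and $N(v)\subseteq N(u)$, then $\mathrm{Ind}(G)\simeq\mathrm{Ind}(G-u)$; (C) if $N(v)=\{u\}$, then $\mathrm{Ind}(G)\simeq\Sigma\,\mathrm{Ind}(G-N[u])$; and (D) for any $v$, the cover $\mathrm{Ind}(G)=\mathrm{Ind}(G-v)\cup\bigl(v\ast\mathrm{Ind}(G-N[v])\bigr)$ with intersection $\mathrm{Ind}(G-N[v])$ exhibits $\mathrm{Ind}(G)$ as the homotopy pushout of $\ast\leftarrow\mathrm{Ind}(G-N[v])\hookrightarrow\mathrm{Ind}(G-v)$, so that $\mathrm{Ind}(G-N[v])$ contractible yields $\mathrm{Ind}(G)\simeq\mathrm{Ind}(G-v)$, the inclusion being a homotopy equivalence yields $\mathrm{Ind}(G)$ contractible, and $\mathrm{Ind}(G-v)$ contractible yields $\mathrm{Ind}(G)\simeq\Sigma\,\mathrm{Ind}(G-N[v])$. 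By the inductive hypothesis each of (A)--(C), and the first two special cases of (D), immediately delivers ``contractible or a sphere''. A disconnected $G$ is also immediate, since $\mathrm{Ind}(G_{1}\sqcup G_{2})=\mathrm{Ind}(G_{1})\ast\mathrm{Ind}(G_{2})$ and a join of spheres, or of a sphere with a contractible space, is again a sphere, or contractible.

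\medskip
\noindent\textbf{The structural input.}
It remains to show that $G$ admits one of these reductions, and this is where the hypothesis enters — via the theorem of Chen and Saito that a graph of minimum degree at least three contains a cycle of length divisible by three. In our setting $G$ therefore has a vertex $v$ of degree at most two. Degrees $0$ and $1$ are cases (A) and (C). For $\deg v=2$, with $N(v)=\{a,b\}$ (non-adjacent, there being no triangle), $v$ lies on an ear of $G$ whose length is constrained modulo three, since closing the ear through the rest of $G$ would otherwise create a cycle of length divisible by three. I would analyse this ear using (D) at its high-degree ends together with (C) along its interior, rewriting $\mathrm{Ind}(G)$ up to homotopy as an iterated suspension of the independence complex of a strictly smaller graph in the class; the arithmetic input is that $\mathrm{Ind}(P_{m})$ is contractible for $m\equiv 1\pmod 3$ and homotopy equivalent to $S^{\lceil m/3\rceil-1}$ otherwise, so the bookkeeping modulo three always lands on ``contractible or a single sphere''.

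\medskip
\noindent\textbf{Where the difficulty lies.}
I expect the degree-two case to be the delicate point: one must verify that the homotopy pushouts generated while peeling an ear genuinely assemble into a single sphere rather than a wedge, which means tracking not only homotopy types but the attaching maps $\mathrm{Ind}(G-N[v])\hookrightarrow\mathrm{Ind}(G-v)$, and arranging the induction so that the borderline situations — the two ends of the ear coinciding, or stripping an ear disconnecting $G$, or creating new dominated vertices — are absorbed uniformly. The deeper obstacle, and the reason Conjecture~\ref{c3} remains open in general, is that the Chen--Saito shortcut evaporates once only \emph{induced} cycles of length divisible by three are forbidden: such graphs (for instance the complete bipartite graphs) can have arbitrarily large minimum degree, so the appeal to a low-degree vertex must be replaced by the far more intricate structural analysis underpinning the Chudnovsky--Scott--Seymour--Spirkl proof of Conjecture~\ref{c2}, lifted from the level of Euler characteristics to that of homotopy type.
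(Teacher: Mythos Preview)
The statement you are asked to prove is Conjecture~\ref{c3}, which the paper does \emph{not} prove; it is explicitly left open. What the paper does prove is the weaker Theorem~\ref{thm}: if $G$ has no cycles (induced or not) of length divisible by three, then $\mathrm{Ind}(G)$ is contractible or a sphere. Your outline is honest about this distinction and in effect targets the same weaker theorem, so the relevant comparison is between your sketch and the paper's proof of Theorem~\ref{thm}.

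The essential gap in your proposal is the degree-two case, which you yourself flag as ``the delicate point'' but do not carry out. From Chen--Saito you obtain a single vertex $v$ of degree two, and you propose to peel an ear using the pushout (D). But (D) only yields ``contractible or sphere'' when one of $\mathrm{Ind}(G-v)$, $\mathrm{Ind}(G-N[v])$ is contractible or the inclusion is a homotopy equivalence; in the generic case the pushout is a mapping cone whose homotopy type depends on the attaching map, and nothing in your induction controls that map. Saying that the arithmetic of $\mathrm{Ind}(P_m)$ modulo three ``always lands on contractible or a single sphere'' is not an argument: the ear interacts with the rest of $G$ through two attaching vertices, and there is no reason the two reductions at its ends should be compatible. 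This is precisely the obstacle that prevents a naive Mayer--Vietoris/pushout induction from closing, and you have not overcome it.

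The paper avoids this difficulty by using a sharper structural input than Chen--Saito. Gauthier's theorem (Theorem~\ref{thm:G}) says that a $2$--connected graph with no cycle of length divisible by three contains either two \emph{adjacent} degree-two vertices or two non-adjacent degree-two vertices with the same neighbourhood. In the first case one has an induced path $P=abcd$ of length three with internal vertices of degree two, and the paper proves a dedicated lemma (Lemma~\ref{lem:path}) that $\mathrm{Ind}(G)\simeq\Sigma\,\mathrm{Ind}(G/P)$; crucially, contracting a path of length three preserves cycle lengths modulo three, so $G/P$ stays in the class and induction applies cleanly. In the second case the fold lemma applies directly. The connected but not $2$--connected case is handled by a separate cut-vertex lemma (Lemma~\ref{lem:separate}) showing that the wedge at a cut vertex inherits ``contractible or sphere'' from the pieces; your outline treats only the disconnected case and does not address cut vertices. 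In short, the paper's route replaces your unspecified ear analysis with one clean contraction step enabled by a stronger graph-theoretic lemma, and that is what makes the induction close.
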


Conjecture~\ref{c1} was proved by Bonamy, Charbit, and Thomass\'e \cite{BCT14}, and generalised by Scott and Seymour \cite{SS17}.
Towards Conjecture~\ref{c2}, Gauthier~\cite{Gau17} proved in his PhD thesis at Princeton that if there are no cycles of length divisible by three in $G,$ then
$-1 \leq \tilde{\chi}(\mathrm{Ind}(G)) \leq 1.$ The full Conjecture~\ref{c2} was recently proved by Chudnovsky, Scott, Seymour, and Spirkl~\cite{CSSS18}. So far Conjecture~\ref{c3} is open, but our main result is a small step towards it. Employing the graph theoretic characterisation by Gauthier we prove that:
\begin{theorem}\label{thm}
If $G$ is a graph without cycles of length divisible by three, then $\mathrm{Ind}(G)$ is contractible or homotopy equivalent to a sphere.
\end{theorem}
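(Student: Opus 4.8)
The plan is to prove the statement by strong induction on $|V(G)|$, combining three standard facts about independence complexes with Gauthier's structural description of graphs with no cycle of length divisible by three. Since every induced subgraph of such a $G$ again has no cycle of length divisible by three, the inductive hypothesis is available for all the smaller graphs produced below. Throughout, call a complex \emph{good} if it is contractible or homotopy equivalent to a sphere (with the convention $S^{-1}=\{\emptyset\}=\mathrm{Ind}(\text{empty graph})$, so that $\Sigma$ of a good complex is good).

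I would first record the tools. (i) For any vertex $v$, $\mathrm{Ind}(G)=\mathrm{Ind}(G\setminus v)\cup\big(v*\mathrm{Ind}(G\setminus N[v])\big)$ with the second term a cone, so $\mathrm{Ind}(G)$ is the mapping cone of the inclusion $\iota_v\colon\mathrm{Ind}(G\setminus N[v])\hookrightarrow\mathrm{Ind}(G\setminus v)$. Consequently: if $\mathrm{Ind}(G\setminus N[v])$ is contractible then $\mathrm{Ind}(G)\simeq\mathrm{Ind}(G\setminus v)$; if $\mathrm{Ind}(G\setminus v)$ is contractible then $\mathrm{Ind}(G)\simeq\Sigma\,\mathrm{Ind}(G\setminus N[v])$; and if $\iota_v$ is a homotopy equivalence then $\mathrm{Ind}(G)$ is contractible. (ii) The fold lemma: if $u\neq w$ and $N(u)\subseteq N(w)$, then $u$ is isolated in $G\setminus N[w]$, so the link of $w$ is a cone and $\mathrm{Ind}(G)\simeq\mathrm{Ind}(G\setminus w)$. (iii) For a disjoint union, $\mathrm{Ind}(G_1\sqcup G_2)=\mathrm{Ind}(G_1)*\mathrm{Ind}(G_2)$; a join is contractible if either factor is, and $S^a*S^b=S^{a+b+1}$, so a disjoint union of good graphs is good.

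Next I would clear away the easy cases. If $G$ is disconnected, use (iii). If $G$ has an isolated vertex, $\mathrm{Ind}(G)$ is a cone, hence contractible. If $G$ has a vertex $w$ with $N(u)\subseteq N(w)$ for some $u\neq w$, use (ii). If $G$ has a pendant vertex $u$ with neighbour $v$, then $u$ is isolated in $G\setminus v$, so $\mathrm{Ind}(G\setminus v)$ is contractible and (i) gives $\mathrm{Ind}(G)\simeq\Sigma\,\mathrm{Ind}(G\setminus N[v])$, which is good by the inductive hypothesis. Note also that $G$ is triangle-free, since a triangle is a $C_3$. Thus we are reduced to: $G$ connected, triangle-free, with at least one edge, minimum degree at least two, and with no vertex dominated by another.

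This is where Gauthier's characterisation enters: for a graph as above it should exhibit a distinguished vertex $v$ — for instance a degree-two vertex on a shortest cycle, or whatever vertex the structure theorem singles out — for which one of the three alternatives in (i) provably holds, so that the induction closes; the favourable outcome is $\mathrm{Ind}(G\setminus v)$ contractible, which turns $\mathrm{Ind}(G)$ into a suspension of a good complex. I expect the heart of the matter to be exactly this translation: showing that the graph-theoretic structure forbids the ``wedge of two spheres'' behaviour of $\mathrm{cone}(\iota_v)$ that occurs for a generic $v$, i.e.\ that at the special $v$ either a link or a deletion is contractible, or $\iota_v$ is an equivalence. A short finite list of irreducible base graphs — in practice the shortest cycles $C_4,C_5,C_7,C_8,\dots$, whose independence complexes can be evaluated directly with moves (i)--(ii) and turn out to be single spheres — would be handled by hand, and assembling the cases finishes the induction. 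I would close by remarking that this route is tied to Gauthier's hypothesis: for the induced-cycle version (Conjecture~\ref{c3}) no comparable structural characterisation is known, so a genuinely different argument would be needed there.
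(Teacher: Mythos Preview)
Your outline has the right shape---induction, join for components, folds, and an appeal to Gauthier---but two genuine gaps remain, and they are exactly where the paper does real work.

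First, your reduction stops at ``connected, minimum degree~$\ge 2$, no dominated vertex'', whereas Gauthier's theorem (Theorem~\ref{thm:G}) requires $2$--connectedness. A graph can have minimum degree $\ge 2$ and still possess a cut vertex, and your tools (i)--(iii) do not dispose of that case: if $v$ is a cut vertex then both $\mathrm{Ind}(G\setminus v)$ and $\mathrm{Ind}(G\setminus N[v])$ are joins of good complexes by induction, but the mapping cone of a map between two spheres can easily be a wedge. The paper handles this with a separate lemma (Lemma~\ref{lem:separate}) that analyses a one--point union $G\vee H$ and, in its hardest case, introduces an auxiliary vertex to show $\mathrm{Ind}(G\vee H)\simeq \mathrm{Ind}(G\setminus u)\ast\mathrm{Ind}(H)$ when $\mathrm{Ind}(G)$ is contractible. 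You need something of this kind.

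Second, and more seriously, the paragraph beginning ``This is where Gauthier's characterisation enters'' is not a proof but a hope. Gauthier's conclusion is concrete: a $2$--connected graph with no $3k$--cycle has either two adjacent degree--two vertices, or two non-adjacent degree--two vertices with the same neighbourhood. The second alternative is handled by your fold lemma. The first gives an induced path $P=abcd$ with $\deg b=\deg c=2$; but nothing in your toolkit (i)--(iii) tells you what $\mathrm{Ind}(G)$ looks like in terms of a smaller graph here. The paper's main technical contribution is precisely this: Lemma~\ref{lem:path} shows $\mathrm{Ind}(G)\simeq\mathrm{susp}\,\mathrm{Ind}(G/P)$ when $a\not\sim d$, and then one must also check that $G/P$ (which is \emph{not} an induced subgraph of $G$) still has no $3k$--cycle, which holds because contracting a length--three path changes cycle lengths by multiples of~$3$. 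Your alternative of picking a ``distinguished $v$'' and hoping that one of the mapping--cone alternatives fires does not work: for a degree--two vertex $b$ on such a path, neither $\mathrm{Ind}(G\setminus b)$ nor $\mathrm{Ind}(G\setminus N[b])$ need be contractible in general, so you would be stuck.
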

We propose that the sequence of conjectures by Kalai and Meshulam can be amended by:
\begin{conjecture}\label{c4}
For any graph $G,$ we have that $\mathrm{Ind}(H)$ is contractible or homotopy equivalent to a sphere for every induced subgraph $H$ of $G$ if and only if there are no induced cycles of length divisible by three in $G.$
\end{conjecture}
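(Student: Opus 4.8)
\section{Proof of Theorem~\ref{thm}}

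The plan is to induct on $|V(G)|$, using throughout that every subgraph of $G$ again has no cycle of length divisible by three, so that the inductive hypothesis is available for $G-v$, for $G-N[v]$, and for any induced subgraph. Three elementary homotopy reductions, each preserving the class ``contractible or homotopy equivalent to a sphere'' (with the convention that the void complex $\{\emptyset\}$ is $S^{-1}$), will dispatch the generic situations. First, if $G$ has an isolated vertex $v$ then $\mathrm{Ind}(G)=v*\mathrm{Ind}(G-v)$ is contractible. Second, if distinct vertices $v,w$ satisfy $N(v)\subseteq N(w)$ then $v$ is isolated in $G-N[w]$, so $\mathrm{lk}_{\mathrm{Ind}(G)}(w)=\mathrm{Ind}(G-N[w])$ is contractible, and collapsing the two contractible subcomplexes $\mathrm{lk}_{\mathrm{Ind}(G)}(w)$ and $\overline{\mathrm{st}}_{\mathrm{Ind}(G)}(w)$ gives $\mathrm{Ind}(G)\simeq\mathrm{Ind}(G-w)$. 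Third, if $v$ is a leaf with neighbour $u$, then composing the cofibre sequence for $v$ in $\mathrm{Ind}(G)$ with the cofibre sequence for $u$ in $\mathrm{Ind}(G-v)$ collapses everything except one unreduced suspension, yielding $\mathrm{Ind}(G)\simeq\Sigma\,\mathrm{Ind}(G-N[u])$. Since a suspension of a sphere is a sphere and a suspension of a contractible space is contractible, the inductive hypothesis closes each of these cases.

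After iterating these reductions we may assume $G$ has minimum degree at least $2$, no two false twins, no vertex whose neighbourhood is contained in another's, and (a triangle being a $3$-cycle) no triangle; by the classical fact that minimum degree $\geq 3$ forces a cycle of length divisible by three, such a $G$ then has a vertex of degree exactly $2$. At this point I would invoke Gauthier's graph-theoretic characterisation of graphs with no cycle of length divisible by three, which --- in the form convenient here --- exhibits the possible shapes of such an ``irreducible'' $G$: it is assembled along its cut vertices from a bounded list of $2$-connected blocks, in essence the cycles $C_n$ with $3\nmid n$ together with certain theta-like graphs (two branch vertices joined by internally disjoint paths whose lengths pairwise sum to a non-multiple of $3$, with the complete bipartite graphs $K_{2,k}$ as a special case). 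If $G$ is disconnected, $\mathrm{Ind}(G)$ is the join of the independence complexes of its components and we are done, a join of spheres being a sphere and a join with a contractible factor being contractible. If $G$ has a cut vertex $v$, splitting $G=G_1\cup_v G_2$ makes $\mathrm{Ind}(G-v)=\mathrm{Ind}(G_1-v)*\mathrm{Ind}(G_2-v)$ and $\mathrm{Ind}(G-N[v])=\mathrm{Ind}(G_1-N[v])*\mathrm{Ind}(G_2-N[v])$ into joins of strictly smaller independence complexes, which the inductive hypothesis identifies; one must then check that the homotopy cofibre of the fully determined inclusion between these two joins is again contractible or a single sphere, which reduces the problem to the $2$-connected blocks.

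It then remains to compute $\mathrm{Ind}$ for each $2$-connected graph on the list. For $C_n$ with $3\nmid n$ this is the classical computation that $\mathrm{Ind}(C_n)$ is a single sphere, via the recursion $\mathrm{Ind}(C_n)\simeq\Sigma\,\mathrm{Ind}(C_{n-3})$ and the small base cases; for a theta-like block one deletes a degree-$2$ vertex adjacent to a branch vertex and runs the cofibre sequence once more, the pieces $G-v$ and $G-N[v]$ being path- or theta-type graphs already handled. I expect the genuine obstacle of the whole argument to be visible precisely here: the basic cofibre sequence $\mathrm{Ind}(G-N[v])\to\mathrm{Ind}(G-v)\to\mathrm{Ind}(G)$ is \emph{not} homotopy-type preserving in our sense for arbitrary graphs --- depending on the connecting map it can produce a wedge of two spheres, a Moore space with torsion, or a space like $\mathbb{CP}^2$ --- and indeed for a cycle of length divisible by three it genuinely yields the wedge $S^{k-1}\vee S^{k-1}$. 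The whole force of Gauthier's characterisation is therefore that, in the graphs that survive the elementary reductions, one can always choose a vertex $v$ at which this connecting map is controlled: either $\mathrm{Ind}(G-v)$ or $\mathrm{Ind}(G-N[v])$ is contractible, or they are spheres of equal dimension linked by a degree-$\pm1$ map. Verifying this uniformly, block by block, rather than the surrounding topological bookkeeping, is where the real work of the proof lies.
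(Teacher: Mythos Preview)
The statement you were given is Conjecture~\ref{c4}, which the paper explicitly leaves \emph{open}; there is no proof in the paper to compare your attempt against. Your write-up is in fact aimed at Theorem~\ref{thm} (your own section heading says so), which carries the much stronger hypothesis that $G$ has no cycle of length divisible by three, induced or not. Everything you invoke --- Gauthier's structure theorem, the fact that minimum degree $\geq 3$ forces a cycle of length $\equiv 0\pmod 3$, the claim that ``every subgraph of $G$ again has no cycle of length divisible by three'' --- concerns all cycles, and none of it is available under the weaker ``no induced $3k$-cycle'' hypothesis of Conjecture~\ref{c4}. So as an attack on the conjecture your proposal does not get started.

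Read instead as a sketch toward Theorem~\ref{thm}, your plan diverges from the paper's and is incomplete in the places you yourself flag. The paper uses Gauthier's theorem in the precise form of Theorem~\ref{thm:G}: a $2$-connected graph with no $3k$-cycle has either two adjacent degree-$2$ vertices or two nonadjacent degree-$2$ vertices with equal neighbourhood. In the first case Lemma~\ref{lem:path} (contracting an induced path of length three suspends the independence complex) or, if the path's endpoints are adjacent, Lemma~\ref{lem:square}, reduces the vertex count; in the second case the fold Lemma~\ref{lem:fold} applies. The cut-vertex step is handled by a self-contained Lemma~\ref{lem:separate}, which shows directly that $\mathrm{Ind}(G\vee H)$ is a sphere or contractible once the six smaller complexes $\mathrm{Ind}(G)$, $\mathrm{Ind}(G\setminus u)$, $\mathrm{Ind}(G\setminus N[u])$ and their $H$-counterparts are. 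By contrast you assert that Gauthier \emph{classifies} the $2$-connected blocks as cycles and ``theta-like'' graphs and propose to compute $\mathrm{Ind}$ block by block; that is not the form of Gauthier's result the paper cites, and you concede that both the cofibre analysis at a cut vertex and the block-by-block verification are ``where the real work lies'' without doing them. The paper's path-contraction lemma is exactly the device that sidesteps the uncontrolled connecting map you worry about: it replaces the cofibre sequence at a single vertex by a clean suspension identity matched to Gauthier's structural output, so no case-by-case control of attaching maps is ever needed.
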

Our theorem generalises one by Ehrenborg and Hetyei \cite{EhrHet06} that shows that for any forest its independence complex is contractible or homotopy equivalent to a sphere.

Some historical remarks on this setup might be warranted. When Lov\'asz~\cite{Lov78} proved Kneser's conjecture he demonstrated that the topology of polyhedral complexes constructed from graphs can inform us about their chromatic numbers. The independence complex is essentially an easily defined piece of one of Lov\'asz complexes, see the spectral sequences by Babson and Kozlov in Sections 4 and 5 of \cite{BK07} for how it fits in technically. But as the independence complex of a graph with an isolated vertex is a cone, the topology of the complex by itself doesn't provide any chromatic information. The conjectures above by Kalai and Meshulam proposed that one should consider the topology of all induced subgraphs at once. That package of information makes sense, because the dimensions of those homologies are the fine graded betti numbers of the variety cut out by $x_ux_v$ for all edges $uv$ of the graph, a fundamental set of invariant in algebraic geometry describing projective resolutions. For cycle graphs the homology depends on their length, as the independence complex of a cycle of length $l$ is homotopy equivalent to a wedge of two spheres if $3$ divides $l$, and homotopy equivalent to one sphere otherwise. To minimise the amount of homology to tentatively restrict the chromatic number, while having some cycles in the graphs, Kalai and Meshulam conditioned on that no cycles of length divisible by three would be allowed.

Further evidence towards that Conjecture~\ref{c4} is reasonable, is in Engstr\"om~\cite{E14} where the total Betti number is bounded in terms of the number of disjoint cycles in a graph. Another indirect reason is the structure of the proof of Conjecture~\ref{c2} by Chudnovsky, Scott, Seymour, and Spirkl~\cite{CSSS18}. As their topic is the reduced Euler charateristic, they have lots of powers of $-1$ to keep track of due to dimension jumps. To get rid of that, they introduce \emph{counters} that actually seems to count spheres, although they don't claim that. Unfortunately it is difficult to replace the inclusion/exclusion formulas of that paper with suitable spectral sequences that would provide a proof on the homological level, not to mention the diagrams of spaces that would give the homotopy theoretic results.

To explain another interesting connection we start off with an example. The graph $K_{n,n}$ have no induced cycles of length divisible by three. Consider the simplex $\Delta \subset \mathbb{R}^n$ whose vertices are the $n$ unit vectors and define the polytope $P=\Delta \times \Delta \subset \mathbb{R}^{2n}.$ The vertices of $P$ correspond to the edges of $K_{n,n}.$ One can see that the faces of $P$ are in bijection with the induced subgraphs $H$ of $K_{n,n}$ such that $\mathrm{Ind}(H)$ is homotopy equivalent to a sphere (in this case equivalently that $\tilde{\chi}(\mathrm{Ind}(H)) = \pm 1$ for those that prefer a completely combinatorial statement). Thus, in this case, the fine structure of what induced subgraphs that are contractible or spheres up to homotopy are encoded by the edge polytope of $K_{n,n}$ (see Tran and Ziegler~\cite{TZ14} for a introduction to these polytopes). The connection between the Kalai-Meshulam conjectures and the edge polytopes is implicit in a paper by N\'oren~\cite{N19}. The projective resolutions mentioned earlier can be constructed as chain complexes supported by regular cell complexes, a concept called cellular resolutions introduced by Bayer and Sturmfels~\cite{BS98}. Nor\'en provided a linear algebra type condition on the face structure of edge polytopes to confirm that the related minimal regular cellular resolutions are unique, and that condition at the same time confirms that the fine graded betti numbers are as those considered by Kalai and Meshulam.
\begin{conjecture}\label{c5}
For any graph $G$ without induced cycles of length divisible by three, there is a polytope $P$ whose vertices are labelled by the edges of $G$ and its faces are in bijection with the induced subgraphs of $G$ whose reduced Euler characteristic is $\pm 1.$  (That is, the vertices of a face of $P$ correspond to the edges of an induced subgraph of $G$.)
\end{conjecture}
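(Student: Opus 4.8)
The plan is to realize the sought polytope as the combinatorial support of the minimal multigraded free resolution of the edge ideal $I_G=(x_ux_v : uv\in E(G))\subseteq k[x_w : w\in V(G)]$, and to translate that resolution into the language of $\mathrm{Ind}$ via Hochster's formula $\beta_{i,W}(k[x]/I_G)\cong\dim_k \tilde H_{|W|-i-1}(\mathrm{Ind}(G[W]);k)$. Thus the nonzero multidegrees $W$ are exactly those induced subgraphs whose independence complex carries reduced homology. Using the sphere-or-contractible dichotomy — Theorem~\ref{thm} in the cycle-free case, and Conjecture~\ref{c4} in general — the homology of each $\mathrm{Ind}(G[W])$ is concentrated in a single degree and of rank at most one, so that the multidegrees carrying homology are precisely the induced subgraphs $H$ with $\tilde\chi(\mathrm{Ind}(H))=\pm1$ (equivalently those that are spheres), each contributing a one-dimensional Betti space. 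This reduces the conjecture to a purely combinatorial statement: the poset of induced subgraphs $H$ with $\tilde\chi(\mathrm{Ind}(H))=\pm1$, ordered by edge-set inclusion, is the face lattice of a polytope $P$ whose vertices carry the edges of $G$ as labels, with the face labelled $W$ matched to the induced subgraph $G[W]$.

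First I would reduce to connected graphs. Because $\mathrm{Ind}(H_1\sqcup H_2)=\mathrm{Ind}(H_1)*\mathrm{Ind}(H_2)$ and $\tilde\chi(X*Y)=-\tilde\chi(X)\tilde\chi(Y)$, the $\pm1$-subgraphs of a disjoint union are exactly the pairs of $\pm1$-subgraphs of the two parts, so their poset is a product poset; since the face lattice of a product of polytopes is the product of the face lattices, a polytope for $G$ can be assembled as $P_{G_1}\times P_{G_2}$ once polytopes for the components are known. For connected $G$ I would construct a candidate support on the vertex set $\{x_ux_v\}$ by the cellular resolution theory of Bayer and Sturmfels~\cite{BS98}, taking the regular cell complex whose cells are the $\pm1$-subgraphs, and verify acyclicity and minimality through the linear-algebra criterion of N\'oren~\cite{N19} applied to the face structure of the edge polytope of $G$. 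The calibrating case is $K_{n,n}$, for which this complex is exactly $\Delta\times\Delta$ and the criterion is satisfied.

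The main obstacle is \emph{polytopality}: upgrading the regular cell complex just described to the face lattice of a single convex polytope. The cellular machinery of \cite{BS98} and \cite{N19} delivers at best a regular (polytopal) cell complex, and there is no formal reason for it to be a polytope. A polytope's face lattice has a unique maximal element, which here forces a unique maximal $\pm1$-subgraph and in particular $\mathrm{Ind}(G)$ to be a sphere rather than merely contractible; controlling this is exactly where the hypothesis of no induced cycle of length divisible by three must enter decisively. I expect the proof to proceed by an induction that builds $P$ through polytope operations mirroring the recursive homotopy computation of $\mathrm{Ind}$ — products for disjoint unions as above, together with pyramid- and suspension-type operations modelling the cofiber sequence $\mathrm{Ind}(G\setminus N[v])\to\mathrm{Ind}(G\setminus v)\to\mathrm{Ind}(G)$ for a vertex $v$ supplied by the structure theory underlying \cite{CSSS18}. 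Making each step preserve convexity, and thereby isolating precisely the graphs for which a genuine polytope (as opposed to a polytopal complex) exists, is the crux; it is already subtle for trees, where the two competing homotopy types must be reconciled with the requirement of a single top face.
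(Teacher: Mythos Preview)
The statement you are addressing is \emph{Conjecture}~\ref{c5}, and the paper does not prove it: it is stated as an open problem, motivated by the $K_{n,n}$ example and the implicit connection to Nor\'en's work on cellular resolutions. There is therefore no paper proof to compare your proposal against.

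What you have written is not a proof but a research outline, and you are candid about this: you invoke Conjecture~\ref{c4} (itself open) to obtain the sphere-or-contractible dichotomy in the induced-cycle setting, and you explicitly flag polytopality as ``the crux'' without resolving it. The Hochster-formula translation and the Bayer--Sturmfels/Nor\'en cellular-resolution framework you describe are precisely the connections the paper gestures at in the paragraph preceding the conjecture, so your framing is well aligned with the author's intentions; but none of it is new, and none of it closes the gap.

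There is one point where your own analysis deserves to be pushed further. You observe that the top face of a polytope forces a unique maximal $\pm 1$-subgraph, hence that $\mathrm{Ind}(G)$ (with isolated vertices removed) must itself be a sphere. This is not guaranteed by the hypothesis: already for the path $G=abcd$ on four vertices, which has no cycles whatsoever, $\mathrm{Ind}(G)$ is contractible, while the $\pm 1$-subgraphs are the three edges and the two induced $P_3$'s. No polytope on three labelled vertices has exactly this face poset; the natural candidate is the $1$-dimensional polytopal complex consisting of two segments glued at the vertex $bc$, which is not a polytope. So either the conjecture is intended with ``polytopal complex'' in place of ``polytope'', or there is an implicit restriction (say to graphs with $\mathrm{Ind}(G)$ a sphere) that is not stated. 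This is a genuine obstruction to the literal statement, not merely a technical hurdle in a would-be inductive proof, and it should be confronted before investing in the inductive scheme you sketch.
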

The edge polytopes do not solve the conjecture alone. If $G$ is the five cycle then $P$ can be a regular pentagon in the plane, but the edge polytope of $G$ is a simplex.

\section{Independence complexes}

Some basic graph notions. The \emph{length} of a path is its number of edges. The \emph{neighbourhood} of $v$ in $G$ is $N_G(v)=\{u \in V(G) \mid uv \in E(G) \},$ and $N_G[v] = \{v\} \cup N_G(v).$ When $G$ is evident from the context, it is dropped. The \emph{contraction}, or identification, of a subgraph $H$ of $G$ to a new point is denoted by $G / H;$ the \emph{deletion} by $G \setminus H.$ A connected graph on at least three vertices is 2--connected if no deletion of a vertex disconnects it. For basic combinatorial topology, in particular related to complexes from graphs, we refer to Jonsson~\cite{Jon08}.

\begin{remark}
In Lemma~\ref{lem:path} we prove that if an induced path of length three is contracted to a point, then the corresponding independence complexes are related up to homotopy by a suspension. A weaker version, where an induced path of length four is contracted to an edge, is less technical to prove and it was first done by Csorba~\cite{Cso09}. The question of subdividing edges of graphs and its influence on the topology of the associate independence complexes partly originate from theoretical physics, where results in that manner were proved using Hodge theory, see \cite{Eng09b,HS10}. Adamaszek~\cite{Ada12}, Barmak~\cite{Bar13} and Jonsson~\cite{Jon12} introduced interesting techniques to decompose independence complexes of graphs, in particular in the triangle-free setting. The lemmas in this paper can be employed to simplify calculations of the extreme Khovanov cohomology of knots, see Gonz\'alez-Meneses, Manch\'on, and Silvero~\cite{GMMS18}; and Przytycki and Silvero~\cite{PS18}. There is a  shorter proof of Lemma~\ref{lem:path} by diagrams of spaces as surveyed by Welker, Ziegler, and \v{Z}ivaljevi\'c~\cite{WZZ99}, and the same goes for the remaining lemmas, but as that might be considered slightly too technical for our purposes, we have not employed it.
\end{remark}

\begin{lemma}\label{lem:path}
If $G$ is a graph with a path $P$ of length three whose internal vertices are of degree two and whose end vertices are not adjacent, then
$\mathrm{Ind}(G)$ is homotopy equivalent to $\mathrm{susp}(\mathrm{Ind}(G / P)).$
\end{lemma}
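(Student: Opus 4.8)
The plan is to cover $\mathrm{Ind}(G)$ by three contractible subcomplexes whose intersection pattern forces a suspension, and to match this with a one-step decomposition of $\mathrm{Ind}(G/P)$. Write the path as $P\colon a-c_1-c_2-b$, so the hypotheses read $N(c_1)=\{a,c_2\}$, $N(c_2)=\{c_1,b\}$ and $ab\notin E(G)$. For a face $\sigma$ of a simplicial complex $K$ let $\mathrm{st}_K(\sigma)=\{\tau\in K\mid\sigma\cup\tau\in K\}$ be its closed star, a contractible subcomplex whenever $\sigma\neq\emptyset$; for $K=\mathrm{Ind}(H)$ one has $\mathrm{st}_K(\sigma)=\{I\in\mathrm{Ind}(H)\mid I\cap N_H(v)=\emptyset\text{ for all }v\in\sigma\}$. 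I will use two elementary facts, both following from the fact that a subcomplex inclusion is a cofibration. \textbf{(i)} For a vertex $v$ of a graph $H$, $\mathrm{Ind}(H)=\mathrm{st}_{\mathrm{Ind}(H)}(v)\cup\mathrm{Ind}(H\setminus v)$ with intersection $\mathrm{Ind}(H\setminus N[v])$; since the star is a cone, $\mathrm{Ind}(H)$ is the mapping cone $\mathrm{Cone}\bigl(\mathrm{Ind}(H\setminus N[v])\hookrightarrow\mathrm{Ind}(H\setminus v)\bigr)$. \textbf{(ii)} If $A$ and $B$ are contractible subcomplexes of some complex, then $A\cup B\simeq\mathrm{susp}(A\cap B)$, the equivalence collapsing $A$ and $B$ to the two cone points and keeping $A\cap B$ as the equator.

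First I would settle $G/P$. Let $p$ be the vertex produced by contracting $P$. Then $N_{G/P}(p)=(N(a)\cup N(b))\setminus V(P)$, hence $(G/P)\setminus p=G\setminus V(P)$ and $(G/P)\setminus N[p]=G\setminus(N[a]\cup N[b])$. Abbreviating $Z=\mathrm{Ind}(G\setminus V(P))$ and $U=\mathrm{Ind}(G\setminus(N[a]\cup N[b]))$, and letting $\iota\colon U\hookrightarrow Z$ be the inclusion, fact (i) gives $\mathrm{Ind}(G/P)=\mathrm{Cone}(\iota)$, so $\mathrm{susp}(\mathrm{Ind}(G/P))$ is the mapping cone of $\mathrm{susp}(\iota)\colon\mathrm{susp}(U)\to\mathrm{susp}(Z)$. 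It therefore suffices to identify $\mathrm{Ind}(G)$ with this last mapping cone.

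The key step is the claim $\mathrm{Ind}(G)=\mathrm{st}(c_1)\cup\mathrm{st}(c_2)\cup\mathrm{st}(\{a,b\})$, all stars taken in $\mathrm{Ind}(G)$, where $\{a,b\}$ is a face because $ab\notin E(G)$. To verify the cover, take an independent set $I$ lying in neither $\mathrm{st}(c_1)$ nor $\mathrm{st}(c_2)$; then $I\cap\{a,c_2\}\neq\emptyset$ and $I\cap\{c_1,b\}\neq\emptyset$, and independence forces successively $c_2\notin I$, then $a\in I$, then $c_1\notin I$, then $b\in I$, so $\{a,b\}\subseteq I$ and $I\in\mathrm{st}(\{a,b\})$ --- this is exactly where degree two, length three and $ab\notin E(G)$ are all used. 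A direct computation of the intersections of the cover gives $\mathrm{st}(c_1)\cap\mathrm{st}(c_2)=Z$; $\mathrm{st}(c_1)\cap\mathrm{st}(\{a,b\})=b\ast U$ and $\mathrm{st}(c_2)\cap\mathrm{st}(\{a,b\})=a\ast U$, both contractible; $\mathrm{st}(c_1)\cap\mathrm{st}(c_2)\cap\mathrm{st}(\{a,b\})=U$; and hence $\bigl(\mathrm{st}(c_1)\cup\mathrm{st}(c_2)\bigr)\cap\mathrm{st}(\{a,b\})=(b\ast U)\cup(a\ast U)=\mathrm{susp}(U)$. By fact (ii), $\mathrm{st}(c_1)\cup\mathrm{st}(c_2)\simeq\mathrm{susp}(Z)$, and $\mathrm{Ind}(G)$ is obtained from $\mathrm{st}(c_1)\cup\mathrm{st}(c_2)$ by attaching the contractible complex $\mathrm{st}(\{a,b\})$ along $\mathrm{susp}(U)$, so $\mathrm{Ind}(G)\simeq\mathrm{Cone}\bigl(\mathrm{susp}(U)\hookrightarrow\mathrm{st}(c_1)\cup\mathrm{st}(c_2)\bigr)$.

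The remaining point --- which I expect to be the main obstacle --- is to show that under the equivalence $\mathrm{st}(c_1)\cup\mathrm{st}(c_2)\simeq\mathrm{susp}(Z)$ of fact (ii) this attaching map becomes $\mathrm{susp}(\iota)$: the cone $b\ast U\subseteq\mathrm{st}(c_1)$ is carried to the cone over $\iota(U)$ towards the first suspension point, $a\ast U\subseteq\mathrm{st}(c_2)$ to the cone over $\iota(U)$ towards the second, and the common base $U\subseteq Z$ to $\iota(U)$ in the equator, which is precisely $\mathrm{susp}(\iota)$. Then $\mathrm{Ind}(G)\simeq\mathrm{Cone}(\mathrm{susp}(\iota))\simeq\mathrm{susp}(\mathrm{Ind}(G/P))$. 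The care required here is twofold: one must run fact (ii) naturally enough to track where the hemispheres $a\ast U$ and $b\ast U$ land --- most transparently via the double mapping cylinder model, or by recasting the whole argument as a homotopy colimit of a diagram of spaces over the nerve of the cover in the style of \cite{WZZ99} --- and one must dispose of the degenerate cases, for instance $U$ being the empty complex or $G$ equal to $P$, by the usual conventions on reduced homology and the $(-1)$--sphere.
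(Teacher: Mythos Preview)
Your argument is correct and genuinely different from the paper's. The paper labels the path $abcd$, performs two explicit acyclic matchings $\mathcal{M},\mathcal{N}$ to collapse $\mathrm{Ind}(G)$ to a subcomplex $\Delta$, then writes $\Delta=\Delta_1\cup\Delta_2$ with $\Delta_1=\mathrm{Ind}(bc)\ast\mathrm{Ind}((G/P)\setminus p)$ and $\Delta_2=\mathrm{Ind}(P)\ast\mathrm{Ind}((G/P)\setminus N[p])$; after contracting the contractible $\Delta_2$ and re-coning over the new vertex $p$, it identifies the result \emph{on the nose} with $\mathrm{Ind}(bc)\ast\mathrm{Ind}(G/P)=\mathrm{susp}(\mathrm{Ind}(G/P))$. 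You instead cover $\mathrm{Ind}(G)$ by the three stars $\mathrm{st}(c_1),\mathrm{st}(c_2),\mathrm{st}(\{a,b\})$, read off the intersection lattice, and assemble the answer via iterated homotopy pushouts. The point you flag as delicate---that the equivalence $\mathrm{st}(c_1)\cup\mathrm{st}(c_2)\simeq\mathrm{susp}(Z)$ carries the inclusion of $\mathrm{susp}(U)$ to $\mathrm{susp}(\iota)$---is exactly a naturality-of-homotopy-colimits statement for the map of spans $(b\ast U\leftarrow U\to a\ast U)\to(\mathrm{st}(c_1)\leftarrow Z\to\mathrm{st}(c_2))$, and it goes through cleanly once phrased that way (or via the double mapping cylinder, as you say). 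Amusingly, the paper's own remark preceding the lemma acknowledges that ``there is a shorter proof \ldots\ by diagrams of spaces as surveyed by Welker, Ziegler, and \v{Z}ivaljevi\'c~\cite{WZZ99}'' but opts for the hands-on collapses to avoid that machinery; your proof is essentially the one they allude to. The trade-off is clear: your route is more structural and would generalise more readily, while the paper's route needs no homotopy-colimit technology but pays for it with a longer case-by-case verification of the matchings and the table of faces.
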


\begin{proof}
Denote the vertices of the path by $abcd.$ The proof goes by constructing a sequence of homotopy equivalent spaces. First some faces are to be collapsed away. Let
\[
\mathcal{M} = \left\{
(\sigma, \sigma \cup \{c \} ) 
\left|
\begin{array}{ll}
\sigma \in \mathrm{Ind}(G) &  b,c,d \not\in \sigma \\
a \in \sigma & \sigma \cap N(d) \neq \emptyset  \\
\end{array}
\right.
\right\}
\]
be a matching of faces of $\mathrm{Ind}(G).$ To collapse away $\mathcal{M}$ we should show that the remaining faces form a simplicial complex. That is, if $(\sigma, \sigma \cup \{c \} ) \in \mathcal{M}$ and $\sigma \subseteq \tau \in \mathrm{Ind}(G)$ with $c \not\in \tau,$ then $(\tau,  \tau \cup \{c \} ) \in \mathcal{M}.$ The conditions to be in $\mathcal{M}$ are checked:
\begin{itemize}
\item[-] $\tau \in \mathrm{Ind}(G)$ and $c \not\in \tau$ by assumption,
\item[-] $a \in \tau$ as $a\in \sigma \subseteq \tau,$
\item[-] $b \not\in \tau$ as $a\in \tau$ and $\tau \in \mathrm{Ind}(G),$
\item[-] $\tau  \cap N(d) \neq \emptyset$ as $\sigma \cap N(d) \neq \emptyset$ and $\sigma \subseteq \tau,$ and
\item[-] $d \not\in \tau$ as $\tau  \cap N(d) \neq \emptyset$ and $\tau \in \mathrm{Ind}(G).$
\end{itemize}
Thus, removing the faces fo $\mathcal{M}$ from $\mathrm{Ind}(G)$ is a deformation retraction to a homotopy equivalent complex. The symmetric case
\[
\mathcal{N} = \left\{
(\sigma, \sigma \cup \{b \} ) 
\left|
\begin{array}{ll}
\sigma \in \mathrm{Ind}(G) &  a,b,c \not\in \sigma \\
d \in \sigma & \sigma \cap N(a) \neq \emptyset  \\
\end{array}
\right.
\right\}
\]
also yields a deformation retraction. The presence of the vertices $a$ or $d$ shows that no face is in both $\mathcal{M}$ and $\mathcal{N},$ and we can remove both of the matchings from $\mathrm{Ind}(G)$ to get a homotopy equivalent simplicial complex denoted $\Delta.$ The changes are parsed out in detail in this table:
\[
\begin{array}{c|cc}
s & \{ \sigma | \sigma \cup s \in  \mathrm{Ind}(G) ,  \sigma \cap P = \emptyset  \} & \{ \sigma | \sigma \cup s \in \Delta,  \sigma \cap P = \emptyset  \} \\
\hline
\emptyset & \mathrm{Ind}(G \setminus P) &   \mathrm{Ind}(G \setminus P) \\
\{a\} & \mathrm{Ind}(G \setminus (P \cup N(a)) )  &    \mathrm{Ind}(G \setminus (P \cup N(a) \cup N(d) ) ) \\
\{b\} & \mathrm{Ind}(G \setminus P) &   \mathrm{Ind}(G \setminus P) \\
\{c\} & \mathrm{Ind}(G \setminus P) &   \mathrm{Ind}(G \setminus P) \\
\{d\} &  \mathrm{Ind}(G \setminus (P \cup N(d)) ) &     \mathrm{Ind}(G \setminus (P \cup N(a) \cup N(d) ) ) \\
\{a,c\} &  \mathrm{Ind}(G \setminus (P \cup N(a)) ) &    \mathrm{Ind}(G \setminus (P \cup N(a) \cup N(d) ) )  \\
\{a,d\} &  \mathrm{Ind}(G \setminus (P \cup N(a) \cup N(d) ) ) &    \mathrm{Ind}(G \setminus (P \cup N(a) \cup N(d) ) )  \\
\{b,d\} &  \mathrm{Ind}(G \setminus (P \cup N(d)) )   &    \mathrm{Ind}(G \setminus (P \cup N(a) \cup N(d) ) ) \\
\end{array}
\]
Let $p$ be the point in $G / P$ that the path $P$ is contracted to, and $bc$ be the graph on the two adjacent vertices $b$ and $c.$ According to the
table above $\Delta = \Delta_1 \cup \Delta_2$ where
\[ \Delta_1 =  \textrm{Ind}(bc) \ast \textrm{Ind}(G \setminus P)  =   \textrm{Ind}(bc) \ast \textrm{Ind}( (G/P)\setminus p ) \]
and
\[ \Delta_2 = \textrm{Ind}(P) \ast \textrm{Ind}(G \setminus  (P \cup N(a) \cup N(d) ) ) =   \textrm{Ind}(P) \ast \textrm{Ind}( (G/P)  \setminus   N_{G/P}[p]  ). \]
The subcomplex $\Delta_2$ is contractible as $\textrm{Ind}(P)$ is contractible. Contracting it, we get that $\Delta$ is homotopy equivalent to
\[
\faktor{\Delta}{\Delta_2} = \faktor{\Delta_1 \cup \Delta_2}{\Delta_2} = \faktor{\Delta_1}{\Delta_1 \cap \Delta_2} = \faktor{\Delta_1 \cup p\ast (\Delta_1 \cap \Delta_2)}{p \ast (\Delta_1 \cap \Delta_2).}
\]
As the cone $p \ast (\Delta_1 \cap \Delta_2)$ is contractible, the rightmost space is homotopy equivalent to $\Delta_1 \cup p\ast (\Delta_1 \cap \Delta_2).$ But that is nothing else than $\textrm{Ind}(bc) \ast \textrm{Ind}( G/P )$ as its deletion of $p$ is $\Delta_1$ and its link of $p$ is $\Delta_1 \cap \Delta_2.$ We have demonstrated that 
$\textrm{Ind}( G )$ is homotopy equivalent to $\textrm{Ind}(bc) \ast \textrm{Ind}( G/P ),$ the suspension of $\textrm{Ind}( G/P ).$
\end{proof}

\begin{lemma}\label{lem:fold}[Engstr\"om, Lemma 3.2. in \cite{Eng09a}]
If $G$ is a graph with two distinct vertices $u$ and $v$ satisfying $N(u) \subseteq N(v),$ then $\textrm{Ind}( G )$ and $\textrm{Ind}( G \setminus v )$ are homotopy equivalent.
\end{lemma}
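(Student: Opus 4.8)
The plan is to mimic the matching-and-collapse argument used to prove Lemma~\ref{lem:path}. The single fact that drives everything is the following: if $\sigma \in \mathrm{Ind}(G)$ contains $v$ but not $u$, then $\sigma \cup \{u\} \in \mathrm{Ind}(G)$ as well. Indeed, $v \in \sigma$ together with the independence of $\sigma$ gives $\sigma \cap N(v) = \emptyset$, and $N(u) \subseteq N(v)$ then yields $\sigma \cap N(u) = \emptyset$; in particular $v \notin N(u)$, so $u$ and $v$ are non-adjacent, and $\sigma \cup \{u\}$ is independent.

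Using this, I would introduce the matching
\[
\mathcal{M} = \{ (\sigma, \sigma \cup \{u\}) \mid \sigma \in \mathrm{Ind}(G),\ v \in \sigma,\ u \notin \sigma \}
\]
of faces of $\mathrm{Ind}(G)$, which is well defined by the observation above. The first claim to verify is that $\mathcal{M}$ pairs up exactly the faces of $\mathrm{Ind}(G)$ that contain $v$: a face $\rho$ with $v \in \rho$ and $u \notin \rho$ is the lower element of the pair $(\rho, \rho \cup \{u\})$, while a face $\rho$ with $u,v \in \rho$ is the upper element of the pair $(\rho \setminus \{u\}, \rho)$, and these two descriptions never collide. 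Hence the faces not touched by $\mathcal{M}$ are precisely the faces of $\mathrm{Ind}(G)$ avoiding $v$, which form the subcomplex $\mathrm{Ind}(G \setminus v)$.

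It then remains to argue that collapsing $\mathcal{M}$ is a deformation retraction, and for this I would verify the same downward-closure condition as in Lemma~\ref{lem:path}: if $(\sigma, \sigma \cup \{u\}) \in \mathcal{M}$ and $\sigma \subseteq \tau \in \mathrm{Ind}(G)$ with $u \notin \tau$, then $v \in \tau$ (since $v \in \sigma$), and $\tau \cup \{u\} \in \mathrm{Ind}(G)$ by the driving fact applied to $\tau$, so $(\tau, \tau \cup \{u\}) \in \mathcal{M}$. As in the earlier lemma, this makes removing $\mathcal{M}$ a deformation retraction of $\mathrm{Ind}(G)$ onto the complex of unmatched faces, namely $\mathrm{Ind}(G \setminus v)$, giving the desired homotopy equivalence. (Alternatively, one can note that the link of $v$ in $\mathrm{Ind}(G)$ equals $\mathrm{Ind}(G \setminus N[v])$, in which $u$ is isolated, so this link is a cone with apex $u$ and hence contractible; combined with the decomposition $\mathrm{Ind}(G) = \mathrm{Ind}(G\setminus v) \cup \bigl(v \ast \mathrm{lk}\bigr)$ this again forces the equivalence, but it uses slightly more machinery.) I do not anticipate a genuine obstacle: the only mildly delicate point is the bookkeeping showing that $\mathcal{M}$ is a perfect matching on the faces through $v$, and that is exactly where the full strength of $N(u) \subseteq N(v)$ is used rather than a weaker hypothesis.
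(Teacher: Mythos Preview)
Your argument is correct. The key observation---that any independent set containing $v$ can absorb $u$---is exactly what makes the matching $\mathcal{M}$ well defined, and your verification of the upward-closure condition mirrors the template of Lemma~\ref{lem:path} precisely. The unmatched faces are exactly those avoiding $v$, so the collapse lands on $\mathrm{Ind}(G\setminus v)$ as claimed. Your parenthetical alternative via the contractible link of $v$ is also sound and is essentially an instance of Lemma~\ref{lem:glue}.

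As for comparison: the paper does not supply its own proof of this lemma; it is quoted from \cite{Eng09a} as a known fact. So there is nothing to compare against directly. That said, both routes you outline are standard, and the matching argument is pleasantly self-contained within the toolkit already set up for Lemma~\ref{lem:path}, while the link-is-a-cone argument is shorter but leans on Lemma~\ref{lem:glue}. Either would be an acceptable in-paper justification.
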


\begin{lemma}\label{lem:square}
If $G$ is a graph with a path $P$ of length three whose internal vertices are of degree two and whose end vertices are adjacent, then
$\mathrm{Ind}(G)$ is homotopy equivalent to $\mathrm{susp}(\mathrm{Ind}(G \setminus P)).$
\end{lemma}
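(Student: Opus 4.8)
Write the path as $abcd$, so that $N(b)=\{a,c\}$ and $N(c)=\{b,d\}$ because $b$ and $c$ have degree two, and $ad\in E(G)$ because the end vertices are adjacent. The plan is to first delete the vertex $a$ by a fold, which turns $b$ into a leaf, and then run the same cone‑collapsing argument as in the proof of Lemma~\ref{lem:path}.

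Since $a$ is adjacent to $b$ (an edge of $P$) and to $d$, we have $N(c)=\{b,d\}\subseteq N(a)$, so Lemma~\ref{lem:fold} applied to the pair $c,a$ gives that $\mathrm{Ind}(G)$ is homotopy equivalent to $\mathrm{Ind}(H)$, where $H=G\setminus a$. In $H$ the vertex $b$ has the single neighbour $c$, and since $a\notin N_G(c)$ we still have $N_H(c)=\{b,d\}$, hence $N_H[c]=\{b,c,d\}$ and $H\setminus N_H[c]=G\setminus P$.

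Now split $\mathrm{Ind}(H)=\Gamma_1\cup\Gamma_2$, where $\Gamma_1=\mathrm{Ind}(H\setminus c)$ is the set of faces not containing $c$, and $\Gamma_2=\{c\}\ast\mathrm{Ind}(H\setminus N_H[c])$ is the closed star of $c$, i.e.\ the faces that stay independent after adjoining $c$. Then $\Gamma_2=\{c\}\ast\mathrm{Ind}(G\setminus P)$ is a cone, hence contractible; $\Gamma_1=\{b\}\ast\mathrm{Ind}(H\setminus\{b,c\})$ is a cone because $b$ is isolated in $H\setminus c$, hence also contractible; and $\Gamma_1\cap\Gamma_2=\mathrm{Ind}(H\setminus N_H[c])=\mathrm{Ind}(G\setminus P)$. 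Thus $\mathrm{Ind}(H)$ is a union of two contractible subcomplexes whose intersection is $\mathrm{Ind}(G\setminus P)$, so it is homotopy equivalent to $\mathrm{susp}(\mathrm{Ind}(G\setminus P))$: concretely, exactly as in the proof of Lemma~\ref{lem:path}, one contracts $\Gamma_2$, rewrites the quotient as $\Gamma_1/(\Gamma_1\cap\Gamma_2)$, glues on a cone over $\Gamma_1\cap\Gamma_2$ and contracts that cone, and the resulting space is $\mathrm{Ind}(G\setminus P)$ coned off on two sides, i.e.\ $\mathrm{susp}(\mathrm{Ind}(G\setminus P))$. Combining this with the previous paragraph proves $\mathrm{Ind}(G)\simeq\mathrm{susp}(\mathrm{Ind}(G\setminus P))$.

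The neighbourhood computations after deleting $a$ are routine, and the only step needing care is the passage from the two‑contractible‑subcomplex cover to the suspension; but this is verbatim the manoeuvre already carried out in the proof of Lemma~\ref{lem:path} (there $\Delta_2$ was contractible because $\mathrm{Ind}(P)$ is, whereas here both members of the cover are genuine cones), so I do not expect a real obstacle. One could instead bypass the fold and argue directly with a face matching in the spirit of the sets $\mathcal{M}$ and $\mathcal{N}$, but removing $a$ first makes the situation considerably more transparent.
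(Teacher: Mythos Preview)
Your argument is correct. The first fold, removing $a$ via $N(c)\subseteq N(a)$, is exactly what the paper does. From there the paper takes a shorter second step: in $H=G\setminus a$ one has $N_H(b)=\{c\}\subseteq N_H(d)$, so a second application of Lemma~\ref{lem:fold} removes $d$, and $G\setminus\{a,d\}$ is the disjoint union of the edge $bc$ with $G\setminus P$, giving the suspension immediately. Your route instead splits $\mathrm{Ind}(H)$ at $c$ into two cones meeting in $\mathrm{Ind}(G\setminus P)$; this is valid (it is the standard homotopy pushout $*\leftarrow X\to *\ \simeq\ \Sigma X$, or equivalently the first case of Lemma~\ref{lem:glue} applied at $c$), but note that it is not quite ``verbatim'' the manoeuvre of Lemma~\ref{lem:path}: there only $\Delta_2$ was contractible and the final identification was specific to the structure of $\Delta_1$, whereas here you genuinely need both $\Gamma_1$ and $\Gamma_2$ contractible to land on the suspension. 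The paper's double fold avoids this subtlety entirely and is a line shorter.
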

\begin{proof}
Denote the path by $abcd$ and note that $N_G(c) \subseteq N_G(a).$ By Lemma~\ref{lem:fold}, $\textrm{Ind}( G )$ and $\textrm{Ind}( G \setminus a)$ are homotopy equivalent. As $N_{G \setminus a}(d) \subseteq N_{G \setminus a}(b),$ the same lemma gives that $\textrm{Ind}( G \setminus \{a,d\})$ and $\textrm{Ind}( G )$ are homotopy equivalent. The homotopy equivalence with $\mathrm{susp}(\mathrm{Ind}(G \setminus P))$ follows from that $G \setminus \{a,d\}$ is the disjoint union of $G\setminus P$ and the edge $bc.$
 \end{proof}

This is an old glueing lemma. For a more general result in the independence complex setting, see Proposition 3.1 in Adamaszek~\cite{Ada12}.

\begin{lemma}\label{lem:glue}
Let $\Delta$ be a simplicial complex with a vertex $v.$ If $\Delta \setminus v$ is contractible, then $\Delta$ is homotopy equivalent to 
$\mathrm{susp}(\mathrm{lk}_\Delta(v)).$ If $\mathrm{lk}_\Delta(v)$ is contractible, then $\Delta$ is homotopy equivalent to $\Delta \setminus v.$
\end{lemma}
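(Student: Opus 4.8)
The plan is to run both statements off one decomposition of $\Delta$. Write $\Delta = (\Delta \setminus v) \cup \mathrm{st}_\Delta(v)$, where $\mathrm{st}_\Delta(v) = v \ast \mathrm{lk}_\Delta(v)$ is the closed star of $v$ (every face of $\Delta$ either misses $v$, so lies in $\Delta\setminus v$, or contains $v$, so lies in $v\ast\mathrm{lk}_\Delta(v)$), and observe that $(\Delta\setminus v)\cap \mathrm{st}_\Delta(v) = \mathrm{lk}_\Delta(v)$. All the inclusions appearing below are inclusions of subcomplexes, hence cofibrations, so I may freely use that collapsing a contractible subcomplex is a homotopy equivalence and that $(A\cup B)/B = A/(A\cap B)$.

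For the first statement, assume $\Delta\setminus v$ is contractible. Collapsing it, $\Delta \simeq \Delta/(\Delta\setminus v) = \mathrm{st}_\Delta(v)/\mathrm{lk}_\Delta(v)$ by the decomposition above. Since $\mathrm{st}_\Delta(v) = v\ast\mathrm{lk}_\Delta(v)$ is the cone on $\mathrm{lk}_\Delta(v)$, and the quotient of a cone by its base is (homeomorphic to) the unreduced suspension of the base, we get $\mathrm{st}_\Delta(v)/\mathrm{lk}_\Delta(v) = \mathrm{susp}(\mathrm{lk}_\Delta(v))$, as wanted. If one prefers to stay closer to the bookkeeping style used in the proof of Lemma~\ref{lem:path}, one can instead glue a fresh cone $w\ast\mathrm{lk}_\Delta(v)$ onto $\Delta\setminus v$ along $\mathrm{lk}_\Delta(v)$ and then collapse the two contractible pieces $\Delta\setminus v$ and $w\ast\mathrm{lk}_\Delta(v)$ one after the other; the output is again $\mathrm{susp}(\mathrm{lk}_\Delta(v))$. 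It matters that the suspension is the unreduced one, so that in the degenerate case $\mathrm{lk}_\Delta(v) = \emptyset$ (i.e. $\Delta$ is $\Delta\setminus v$ together with an isolated vertex $v$) the formula correctly gives $S^0$.

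For the second statement, assume $\mathrm{lk}_\Delta(v)$ is contractible. The star $\mathrm{st}_\Delta(v)$ is a cone, hence contractible, so $\Delta \simeq \Delta/\mathrm{st}_\Delta(v) = (\Delta\setminus v)/\mathrm{lk}_\Delta(v)$; and since $\mathrm{lk}_\Delta(v)$ is a contractible subcomplex of $\Delta\setminus v$, collapsing it gives $(\Delta\setminus v)/\mathrm{lk}_\Delta(v) \simeq \Delta\setminus v$. Hence $\Delta \simeq \Delta\setminus v$.

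There is no real obstacle here: the argument is entirely formal, and the only point demanding a word of care is that every subspace collapsed above is an honest subcomplex, so all the collapses are legitimate homotopy equivalences and the cofiber identifications go through. Conceptually, both cases are instances of the single observation that $\Delta$ is the homotopy pushout of $\Delta\setminus v \leftarrow \mathrm{lk}_\Delta(v) \to \mathrm{st}_\Delta(v)$: the pushout of $\ast \leftarrow X \to \ast$ is $\mathrm{susp}(X)$, and the pushout of $Y \leftarrow \ast \to \ast$ is $Y$. I will present the elementary collapsing version to match the rest of the paper.
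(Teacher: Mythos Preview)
Your argument is correct; the decomposition $\Delta = (\Delta\setminus v)\cup \mathrm{st}_\Delta(v)$ with intersection $\mathrm{lk}_\Delta(v)$, together with collapsing contractible subcomplexes along cofibrations, is exactly the standard way to establish this glueing lemma, and your handling of the degenerate case $\mathrm{lk}_\Delta(v)=\emptyset$ is right. The paper does not actually supply a proof: it states the result as ``an old glueing lemma'' and points to Adamaszek~\cite{Ada12} for a more general version, so there is nothing to compare your approach against beyond noting that what you wrote is the expected argument.
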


\begin{lemma}\label{lem:separate}
Let $G$ be a graph with a vertex $u$, $H$ be a graph with a vertex $v,$ and $G \vee H$ be their disjoint union with $u$ and $v$ identified.
If the independence complexes of $G, G\setminus u, G \setminus N_G[u], H, H\setminus v, H \setminus N_H[v]$ are homotopy equivalent to spheres or contractible, then the independence complex of $G \vee H$ is homotopy equivalent to a sphere or contractible.
\end{lemma}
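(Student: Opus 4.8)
The plan is to work at the vertex of $G \vee H$ obtained by identifying $u$ and $v$; call it $w$. First I would record the two standard identifications. Deleting $w$ disconnects $G \vee H$ into $G \setminus u$ and $H \setminus v$, so
\[ \mathrm{Ind}(G \vee H) \setminus w \;=\; \mathrm{Ind}(G \setminus u) \ast \mathrm{Ind}(H \setminus v), \]
and since $N_{G \vee H}[w] = \{w\} \cup N_G(u) \cup N_H(v)$,
\[ \mathrm{lk}_{\mathrm{Ind}(G \vee H)}(w) \;=\; \mathrm{Ind}\bigl((G\vee H)\setminus N_{G\vee H}[w]\bigr) \;=\; \mathrm{Ind}(G \setminus N_G[u]) \ast \mathrm{Ind}(H \setminus N_H[v]). \]
Because a join of two spaces, each contractible or homotopy equivalent to a sphere, is again contractible or homotopy equivalent to a sphere (one has $S^a \ast S^b \simeq S^{a+b+1}$, and a join with a contractible factor is contractible), the hypotheses on $\mathrm{Ind}(G\setminus u), \mathrm{Ind}(H\setminus v)$ and on $\mathrm{Ind}(G\setminus N_G[u]), \mathrm{Ind}(H\setminus N_H[v])$ give that both $\mathrm{Ind}(G\vee H)\setminus w$ and $\mathrm{lk}_{\mathrm{Ind}(G\vee H)}(w)$ are contractible or spheres. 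I would also use repeatedly that, for any graph $\Gamma$ and vertex $x$, the complex $\mathrm{Ind}(\Gamma)$ is up to homotopy the mapping cone of the inclusion $\mathrm{lk}_{\mathrm{Ind}(\Gamma)}(x) \hookrightarrow \mathrm{Ind}(\Gamma)\setminus x$, since $\mathrm{Ind}(\Gamma)$ is the union of $\mathrm{Ind}(\Gamma)\setminus x$ with the cone $x \ast \mathrm{lk}_{\mathrm{Ind}(\Gamma)}(x)$ glued along that link. Applied to $\Gamma = G, H$ this exhibits $\mathrm{Ind}(G)$ and $\mathrm{Ind}(H)$ as the mapping cones of the inclusions $\mathrm{Ind}(G\setminus N_G[u])\hookrightarrow\mathrm{Ind}(G\setminus u)$ and $\mathrm{Ind}(H\setminus N_H[v])\hookrightarrow\mathrm{Ind}(H\setminus v)$, and applied to $\Gamma = G\vee H$ it exhibits $\mathrm{Ind}(G\vee H)$ as the mapping cone of $\mathrm{lk}_{\mathrm{Ind}(G\vee H)}(w)\hookrightarrow\mathrm{Ind}(G\vee H)\setminus w$, which is the join of the former two inclusions.

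Next I would split into cases according to Lemma~\ref{lem:glue}. If $\mathrm{Ind}(G\vee H)\setminus w$ is contractible, then Lemma~\ref{lem:glue} gives $\mathrm{Ind}(G\vee H)\simeq\mathrm{susp}(\mathrm{lk}_{\mathrm{Ind}(G\vee H)}(w))$, and a suspension of a sphere or a contractible space is again a sphere or contractible. If instead $\mathrm{lk}_{\mathrm{Ind}(G\vee H)}(w)$ is contractible, then Lemma~\ref{lem:glue} gives $\mathrm{Ind}(G\vee H)\simeq\mathrm{Ind}(G\vee H)\setminus w$, which we already know is a sphere or contractible. In the remaining case neither of these two complexes is contractible; since a join with a contractible factor is contractible, this forces all four of $\mathrm{Ind}(G\setminus u),\mathrm{Ind}(H\setminus v),\mathrm{Ind}(G\setminus N_G[u]),\mathrm{Ind}(H\setminus N_H[v])$ to be non-contractible spheres.

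In that case I would run the long exact reduced homology sequence of the mapping cone $\mathrm{Ind}(G)$ of $\mathrm{Ind}(G\setminus N_G[u])\hookrightarrow\mathrm{Ind}(G\setminus u)$. Writing $\mathrm{Ind}(G\setminus N_G[u])\simeq S^a$ and $\mathrm{Ind}(G\setminus u)\simeq S^b$: if $a\neq b$ the inclusion is zero on reduced homology and $\mathrm{Ind}(G)$ has the reduced homology of $S^b\vee S^{a+1}$, which is neither a sphere nor contractible, contradicting the hypothesis on $\mathrm{Ind}(G)$; hence $a=b$ and the inclusion has some degree $k$ on $\tilde H_a$, giving $\tilde H_a(\mathrm{Ind}(G))=\mathbb{Z}/k$ and $\tilde H_{a+1}(\mathrm{Ind}(G))=\ker k$, so the hypothesis again forces $k=\pm1$ and $\mathrm{Ind}(G)$ contractible. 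A homology isomorphism between spaces homotopy equivalent to spheres of the same dimension is a homotopy equivalence (degree $\pm1$ self-maps of spheres, with the case $S^0$ handled directly), so $\mathrm{Ind}(G\setminus N_G[u])\hookrightarrow\mathrm{Ind}(G\setminus u)$ is a homotopy equivalence, and likewise $\mathrm{Ind}(H\setminus N_H[v])\hookrightarrow\mathrm{Ind}(H\setminus v)$. Their join $\mathrm{lk}_{\mathrm{Ind}(G\vee H)}(w)\hookrightarrow\mathrm{Ind}(G\vee H)\setminus w$ is then a homotopy equivalence, and the mapping cone of a homotopy equivalence is contractible, so $\mathrm{Ind}(G\vee H)$ is contractible.

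The main obstacle is precisely this last case: the first two cases fall out of Lemma~\ref{lem:glue} directly, but when nothing in sight is contractible one must genuinely invoke the hypotheses on $\mathrm{Ind}(G)$ and $\mathrm{Ind}(H)$ — the homotopy types of the six complexes taken individually do not suffice, it is their compatibility through the two mapping cones that is used. I would also take some care with degenerate configurations — for instance $u$ having a neighbour whose closed neighbourhood contains $N_G[u]$, or $G\setminus N_G[u]$ having no vertices, so that one of the complexes above is a point or the empty complex $S^{-1}$ — and check that such configurations either fall into one of the two contractible cases or are excluded by $\mathrm{Ind}(G)$ (or $\mathrm{Ind}(H)$) being a sphere or contractible.
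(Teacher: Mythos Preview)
Your argument is correct. The first two cases coincide with the paper's Cases~1 and~2, but the residual case is handled quite differently. The paper begins by observing (via Mayer--Vietoris for $\mathrm{Ind}(G)=\mathrm{Ind}(G\setminus u)\cup u\ast\mathrm{Ind}(G\setminus N_G[u])$) that at least one of $\mathrm{Ind}(G)$, $\mathrm{Ind}(G\setminus u)$, $\mathrm{Ind}(G\setminus N_G[u])$ is contractible, and its remaining Case~3 is ``$\mathrm{Ind}(G)$ contractible''. There it builds an auxiliary graph $G\dot{\vee}H$ by adding a twin $\dot w$ of $w$ with $N(\dot w)=N_H(v)$, uses Lemma~\ref{lem:glue} at $\dot w$ and then the fold Lemma~\ref{lem:fold} (since $N(\dot w)\subseteq N(w)$) to obtain the explicit identification $\mathrm{Ind}(G\vee H)\simeq\mathrm{Ind}(G\setminus u)\ast\mathrm{Ind}(H)$. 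Your route instead stays with the mapping-cone description at $w$: once all four join-factors are spheres, the long exact sequence forces each inclusion $\mathrm{Ind}(G\setminus N_G[u])\hookrightarrow\mathrm{Ind}(G\setminus u)$ (and the $H$-analogue) to be a homology isomorphism between equal-dimensional spheres, hence a homotopy equivalence, so the joined inclusion is one too and its mapping cone $\mathrm{Ind}(G\vee H)$ is contractible. The paper's argument is purely combinatorial (graph moves plus Lemma~\ref{lem:glue}) and yields a concrete product formula valid whenever $\mathrm{Ind}(G)$ is contractible; yours is more homotopy-theoretic, trading the auxiliary-graph trick for a short Whitehead/degree argument. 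Both use the hypotheses on $\mathrm{Ind}(G)$ and $\mathrm{Ind}(H)$ only in this last case, which is where the real content lies.
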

\begin{proof}
Denote the vertex in $G \vee H$ that $u$ and $v$ was identified with by $w.$ One of the complexes $\mathrm{Ind}( G), \textrm{Ind}( G\setminus u )$ and $\textrm{Ind}(G \setminus N_G[u] ) $ has to be contractible, because if they all were spheres the Mayer-Vietoris sequence for $\mathrm{Ind}( G) = \textrm{Ind}( G\setminus u ) \cup u \ast \textrm{Ind}(G \setminus N_G[u] )$ would not be exact. 

Case 1: $\textrm{Ind}( G\setminus u )$  is contractible.
The deletion of $w$ from $\textrm{Ind}( G\vee H )$ is contractible as
\[   \textrm{Ind}( G\vee H  \setminus w) = \textrm{Ind}( (G \setminus u) \sqcup (H \setminus v) ) =  \textrm{Ind}( G \setminus u) \ast  \textrm{Ind}  (H \setminus v). \]
By Lemma~\ref{lem:glue} the complexes $\textrm{Ind}( G\vee H )$ and $\mathrm{susp}(\mathrm{lk}_{\textrm{Ind}( G\vee H )}(w))$ are homotopy equivalent. By assumption
the independence complexes of $G \setminus N_G[u]$ and $H \setminus N_H[v]$ are homotopy equivalent to spheres or contractible. As
\[ \mathrm{lk}_{ \textrm{Ind}( G\vee H )}(w) = \mathrm{lk}_{ \textrm{Ind}( G)}(u ) \ast \mathrm{lk}_{ \textrm{Ind}( H)}(v )
 = \textrm{Ind}(G \setminus N_G[u] ) \ast \textrm{Ind}(H \setminus N_H[v] ),  \]
the independence complex of $G \vee H$ is homotopy equivalent to a sphere or contractible.

Case 2: $\textrm{Ind}(G \setminus N_G[u] )$ is contractible. It follows that $\mathrm{lk}_{ \textrm{Ind}( G\vee H )}(w)$ is contractible, and by Lemma~\ref{lem:glue}, the independence complex of $G \vee H$ is homotopy equivalent to the join of  $\textrm{Ind}( G\setminus u )$ and $\textrm{Ind}( H\setminus v ),$ and they are homotopy equivalent to spheres or contractible. 

Case 3: $\mathrm{Ind}( G)$ is contractible. Construct a graph $G \dot{\vee} H$ by adding a vertex $\dot{w}$ to $G \vee H$ whose neighbours are $N_{H}(v).$
The link of $\dot{w}$ in $\mathrm{Ind}( G \dot{\vee} H )$ is contractible as
\[
\mathrm{Ind}( G \dot{\vee} H  \setminus N_{G \dot{\vee} H}[\dot{w}])=\mathrm{Ind}( G \sqcup (H \setminus N_H[v])) = \mathrm{Ind}( G ) \ast  \mathrm{Ind} (H \setminus N_H[v]))
\]
and $\mathrm{Ind}( G )$ is contractible. It follows by Lemma~\ref{lem:glue} that $\mathrm{Ind}( G \dot{\vee} H )$ is homotopy equivalent to
\[ \mathrm{Ind}( G \dot{\vee} H ) \setminus \dot{w} = \mathrm{Ind}( G \vee H ). \]
The neighbourhood of $w$ includes the neighbourhood of $\dot{w}$ in $G \dot{\vee} H,$ and by Lemma~\ref{lem:fold}, $\mathrm{Ind}( G \dot{\vee} H )$ is homotopy equivalent to $\mathrm{Ind}( G \dot{\vee} H ) \setminus v.$ The graph $(G \dot{\vee} H) \setminus v$ is isomorphic to $(G \setminus u) \sqcup H$ by relabelling $\dot{w}$ to $v.$ Thus, $\mathrm{Ind}( G \dot{\vee} H )$ and $\mathrm{Ind}( G \vee H )$ are homotopy equivalent to
$ \mathrm{Ind}( G \setminus u ) \ast \mathrm{Ind}( H ).$ As $\mathrm{Ind}( G \setminus u ) $ and $\mathrm{Ind}( H )$ are homotopy equivalent to spheres or contractible, so is $\mathrm{Ind}( G \vee H ).$
\end{proof}

\begin{theorem}[Theorem 1.0.2, Gauthier \cite{Gau17}]~\label{thm:G}
If $G$ is a 2--connected graph with no cycles of length divisible by three, then there are two adjacent vertices of degree two or two non-adjacent vertices of degree two with the same neighbourhood.  
\end{theorem}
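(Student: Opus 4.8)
The plan is to reduce the theorem, by suppressing degree-two vertices, to a statement about ``zero-sum cycles modulo $3$''. If $G$ has two adjacent vertices of degree two we are done, so assume it does not; then the degree-two vertices of $G$ form an independent set. Suppose, toward a contradiction, that $G$ also has no two degree-two vertices with the same neighbourhood (such a pair would automatically be non-adjacent, so deriving a contradiction from this assumption proves the theorem). Since a triangle is a cycle of length three, $G$ is triangle-free; and since a cycle has two adjacent degree-two vertices, $G$ is not a cycle, so it has a vertex of degree at least three.

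Next I would form the graph $H$ obtained from $G$ by suppressing every degree-two vertex, that is, by replacing each path $x-v-y$ with $\deg_G(v)=2$ by an edge $xy$. Because the degree-two vertices are independent, every vertex of degree at least three keeps its degree, so $H$ has minimum degree at least three and is $2$-connected. Two parallel edges of $H$ would be two parallel edges of $G$, a triangle $x-v-y-x$ of $G$, or two degree-two vertices of $G$ with the same neighbourhood $\{x,y\}$, and all three are excluded; hence $H$ is simple. Weight each edge of $H$ by $1$ if it is already an edge of $G$, and by $2$ if it comes from a suppressed vertex. The cycles of $H$ then correspond bijectively to the cycles of $G$, a cycle of $G$ having length congruent modulo $3$ to the sum of the weights along the corresponding cycle of $H$. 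So the hypothesis that $G$ has no cycle of length divisible by three becomes precisely: $H$ has no cycle whose edge-weights sum to $0$ in $\mathbb{Z}/3$.

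It therefore suffices to prove the following, which is the heart of the matter: \emph{every graph of minimum degree at least three whose edges are weighted by elements of $\{1,2\}\subseteq\mathbb{Z}/3$ has a cycle whose weights sum to $0$.} For the constant weight $1$ this is the classical fact that a graph of minimum degree at least three has a cycle of length divisible by three; the restriction to weights in $\{1,2\}$ is essential, since $K_4$ admits a $\mathbb{Z}/3$-weighting with no zero-sum cycle (put weight $0$ on a perfect matching and weight $1$ on the other four edges). I would attack this lemma by induction, reducing to a minimal counterexample, reducing that to the $2$-connected case, and then using that a $2$-connected graph which is not a cycle contains two vertices joined by three internally disjoint paths $Q_1,Q_2,Q_3$ with weight-sums $s_1,s_2,s_3\in\mathbb{Z}/3$; the three cycles $Q_i\cup Q_j$ have weight-sums $s_i+s_j$, one of which is $0$ unless $(s_1,s_2,s_3)$ is in one of a few degenerate patterns (all three equal, or one equal to $0$ and the other two equal and nonzero). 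The hard part, and the technical core of Gauthier's argument, will be to rule out these degenerate patterns: one enlarges the theta subgraph using a vertex of degree at least three on one of the $Q_i$ together with a further internally disjoint path back, and then controls the weight-sums of the resulting larger family of cycles by a case analysis that crucially uses that \emph{every} edge weight is $1$ or $2$. Unwinding the suppression then yields a cycle of $G$ of length divisible by three, contradicting the hypothesis, and the theorem follows.
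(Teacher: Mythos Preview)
The paper does not prove this theorem at all: it is quoted from Gauthier's thesis and used as a black box in the proof of Theorem~\ref{thm}. So there is no paper proof to compare your proposal against.

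On its merits, your reduction is sound. Under the negated conclusion the degree-two vertices of $G$ are independent, $G$ is triangle-free and not a cycle, and suppressing the degree-two vertices produces a simple $2$-connected graph $H$ of minimum degree at least three whose edges carry weights in $\{1,2\}\subset\mathbb{Z}/3$; cycles of $G$ correspond to cycles of $H$ with the same length modulo $3$. Your checks that $H$ is simple (no parallel edges from a triangle or from twin degree-two vertices) and that degrees are preserved are correct.

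The gap is in the central lemma. You reduce everything to the assertion that a $\{1,2\}$-weighted graph of minimum degree at least three has a zero-sum cycle, and then you do not prove it. Locating a theta subgraph and listing the degenerate sum patterns $(s_1,s_2,s_3)\in\{(1,1,1),(2,2,2),(0,1,1),(0,2,2)\}$ is the easy half; the content of the theorem is precisely the case analysis that rules those patterns out after ``enlarging the theta,'' and you defer that to ``the technical core of Gauthier's argument.'' That is circular: you are invoking the very result you set out to prove. Even the shape of the enlargement is not pinned down --- the extra edge from an internal vertex of a $Q_i$ may land on $Q_i$ itself, on another $Q_j$, or outside the theta, and in each case one must show that the $\{1,2\}$ restriction (as opposed to arbitrary $\mathbb{Z}/3$ weights, where your $K_4$ example shows failure) actually forces a zero-sum cycle. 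Until that case analysis is written out, the proposal is an outline of a strategy, not a proof.
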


We are now ready to prove our main result, Theorem~\ref{thm}.

\begin{proof} Clearly the subgraphs of $G$ also lack cycles of length divisible by three.

The proof is by induction on the number of vertices. If $G$ has less than three vertices we are done. If $G$ is not connected we are done, as its independence complex is the join of the independence complexes of the connected components of $G.$

If $G$ is connected but not 2--connected, then there is a vertex whose removal disconnects it. By Lemma~\ref{lem:separate} we are done.

If $G$ is 2--connected we employ the characterisation of Theorem~\ref{thm:G}. 

First the case of two adjacent vertices $bc$ of degree two on a path $P=abcd$ in $G.$ If the vertices $a$ and $d$ are non-adjacent, then $\mathrm{Ind}(G)$ is homotopy equivalent to $\mathrm{susp}(\mathrm{Ind}(G / P))$ according to Lemma~\ref{lem:path}. There can be no cycles in $G / P$ of length divisible by three, as there are none in $G$ and $P$ is of length three. We may apply induction as $G/P$ has fewer vertices than $G.$
If the vertices $a$ and $d$ are adjacent, then $\mathrm{Ind}(G)$ is homotopy equivalent to $\mathrm{susp}(\mathrm{Ind}(G \setminus P))$ by Lemma~\ref{lem:square} and we are done by induction. 

The second case in the characterisation is that there are two distinct non-adjacent vertices $u$ and $v$ with the same neighbourhood in $G.$ By Lemma~\ref{lem:fold},
$\mathrm{Ind}(G)$ is homotopy equivalent to $\mathrm{Ind}(G \setminus v)$ and induction applies. 
\end{proof}

\end{document}